 \newtheorem{thm}{Theorem}[section]
 \newtheorem{prop}[thm]{Proposition}
 \newtheorem{cor}[thm]{Corollary}
 \theoremstyle{definition}
 \newtheorem{defi}[thm]{Definition}
 \newtheorem{rem}[thm]{Remark}
\newcommand{\N}{\mathbb{N}}
\newcommand{\Z}{\mathbb{Z}}
\newcommand{\C}{\mathbb{C}}
\newcommand{\del}{\partial}
\newcommand{\delbar}{\overline{\del}}
\newcommand{\paragrafon}[1]{\smallskip \paragraph{\texttt{Step {#1}}.\ }}
\DeclareMathOperator{\imm}{im}
\DeclareMathOperator{\de}{d}
\title{Bott-Chern cohomology and $q$-complete domains}
\author{Daniele Angella}
\address[Daniele Angella]{Dipartimento di Matematica\\
Universit\`{a} di Pisa \\
Largo Bruno Pontecorvo 5, 56127\\ 
Pisa, Italy}
\email{angella@mail.dm.unipi.it}
\author{Simone Calamai}
\address[Simone Calamai]{Scuola Normale Superiore\\
Piazza dei Cavalieri 7, 56126\\
Pisa, Italy}
\email{simone.calamai@sns.it}
\keywords{$q$-complete; Bott-Chern cohomology; Aeppli cohomology}
\thanks{This work was supported by the Project PRIN ``Varietà reali e complesse: geometria, topologia e analisi armonica'', by the Project FIRB ``Geometria Differenziale e Teoria Geometrica delle Funzioni'', and by GNSAGA of INdAM}
\subjclass[2010]{32C35; 32F17; 55N30}
\begin{document}

\begin{abstract}
In studying the Bott-Chern and Aeppli cohomologies for $q$-complete manifolds, we introduce the class of cohomologically Bott-Chern $q$-complete manifolds.

\bigskip

Dans l'étude des cohomologies de Bott-Chern et d'Aeppli pour varietés $q$-completes, nous introduisons la classe des varietés cohomologiquement Bott-Chern $q$-completes.
\end{abstract}

\maketitle

\section*{Introduction}

The notion of \emph{$q$-complete} manifolds has been introduced and studied in \cite{andreotti-grauert, rothstein}. In particular, in \cite[Proposition 27]{andreotti-grauert}, A. Andreotti and H. Grauert proved a vanishing result for the higher-degree Dolbeault cohomology groups of $q$-complete manifolds $D$, namely, that $H^{r,s}_{\delbar}(D)=\{0\}$ for any $r\in\N$ and for any $s\geq q$, see also \cite[Theorem 5]{andreotti-vesentini}. Domains having such a vanishing property are called \emph{cohomologically $q$-complete}, and coincide with $q$-complete domains under some regularity conditions. In fact, M.~G. Eastwood and G. Vigna Suria proved that cohomologically $q$-complete domains of a Stein manifold with boundary of class $\mathcal{C}^2$ are in fact $q$-complete, \cite[Theorem 3.8]{eastwood-vignasuria}; see also \cite[\S V.5]{grauert-remmert} for a sheaf-theoretic characterization of Stein domains in $\C^n$.

Besides Dolbeault cohomology, other relevant tools to study geometry and analysis of complex manifolds $X$ are provided by Bott-Chern and Aeppli cohomologies, \cite{bott-chern, aeppli}, namely,
$$ H^{\bullet,\bullet}_{BC}(X) \;:=\; \frac{\ker\del\cap\ker\delbar}{\imm\del\delbar}\;, \qquad H^{\bullet,\bullet}_{A}(X) \;:=\; \frac{\ker\del\delbar}{\imm\del+\imm\delbar} \;. $$

\medskip

In this note, we are concerned with studying Bott-Chern and Aeppli cohomologies of open manifolds, in particular, assuming the vanishing of certain Dolbeault cohomology groups.

In particular, as a consequence of Theorem \ref{thm:bc-dr}, we get that cohomologically $q$-complete manifolds are also cohomologically Bott-Chern $q$-complete. Here, by \emph{cohomologically Bott-Chern $q$-complete} manifold, we mean a complex manifold $X$ of complex dimension $n$ such that $H^{r,s}_{BC}(X)$ vanishes for $r+s\geq n+q$. In this sense, such a notion provides a natural generalization of the classical notion of cohomologically $q$-complete manifolds.

\medskip

\noindent{\sl Acknowledgments.}
The authors are warmly grateful to Adriano Tomassini and Xiuxiong Chen for their constant support and encouragement.
They would like to thank also Alessandro Silva for useful discussions and for his interest, and Marco Franciosi for several suggestions.
Many thanks are also due to the anonymous Referee for his/her remarks.

\section{Dolbeault cohomology vanishing and Bott-Chern and Aeppli cohomologies}

In this section, we prove that the vanishing of certain Dolbeault cohomology groups assures connections between Bott-Chern cohomology and de Rham cohomology, and the vanishing of some Aeppli cohomology.

\medskip

Inspired by \cite{aeppli}, we prove the following inequalities, involving Bott-Chern and de Rham cohomologies, under the assumption of the vanishing of some Dolbeault cohomology groups. (As regards the compact case, we refer to \cite{angella-tomassini-3}, where an inequality {\itshape à la} Fr\"olicher is proven, yielding also a characterization of the $\del\delbar$-Lemma on compact complex manifolds.)

\begin{thm}\label{thm:bc-leq-geq-dr}
 Let $X$ be a complex manifold. Fix $(p,q)\in\left(\N\setminus\{0\}\right)^2$.
 \begin{enumerate}
  \item[{\itshape (a)}] If $\sum_{\substack{r+s=p+q-1\\s\geq \min\{p,q\}}} \dim_\C H^{r,s}_{\delbar}(X)=0$, then there is a natural injective map $H^{p,q}_{BC}(X) \to H^{p+q}_{dR}(X;\C)$.
  \item[{\itshape (b)}] If $\sum_{\substack{r+s=p+q\\s\geq\min\{p,q\}+1}} \dim_\C H^{r,s}_{\delbar}(X)=0$, then there is a natural surjective map $H^{p,q}_{BC}(X) \to H^{p+q}_{dR}(X;\C)$.
 \end{enumerate}
\end{thm}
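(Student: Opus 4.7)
The natural map $H^{p,q}_{BC}(X) \to H^{p+q}_{dR}(X;\C)$ sends $[\alpha]_{BC}$ to $[\alpha]_{dR}$; this is well-defined since any form both $\del$- and $\delbar$-closed is $d$-closed, and any $\del\delbar$-exact form equals $d(\delbar\beta)$, hence is $d$-exact. Throughout, I assume without loss of generality that $p \leq q$, so that $\min\{p,q\} = p$; the case $p > q$ will follow by exchanging the roles of $\del$ and $\delbar$ (equivalently by complex conjugation, since the Dolbeault hypothesis is symmetric in $(p,q)$). I shall also use the conjugation identity $\dim_\C H^{r,s}_{\del}(X) = \dim_\C H^{s,r}_{\delbar}(X)$, which translates each hypothesis into a parallel vanishing for $\del$-cohomology.

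For \emph{(a)}, suppose $\alpha$ is a $(p,q)$-form both $\del$- and $\delbar$-closed with $\alpha = d\gamma$, and decompose $\gamma = \sum_{r+s=p+q-1}\gamma^{r,s}$. The plan is to modify $\gamma$ by subtracting $d$-exact forms — which preserves $d\gamma = \alpha$ — until only $\gamma^{p-1,q}$ survives. In a first phase, proceeding in order of decreasing $r$, I successively eliminate $\gamma^{p+q-1,0}, \gamma^{p+q-2,1}, \ldots, \gamma^{p,q-1}$: at the step treating $\gamma^{r,s}$ with $r \geq p$, the previously killed components collapse the $(r+1,s)$-component of $d\gamma = \alpha$ to $\del\gamma^{r,s} = 0$, whereupon the conjugate hypothesis $H^{r,s}_{\del}(X) = 0$ yields $\gamma^{r,s} = \del\eta$, which I absorb via $\gamma \leadsto \gamma - d\eta$. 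A second, dual phase then uses the direct hypothesis $H^{r,s}_{\delbar}(X) = 0$ for $s \geq p$ to eliminate $\gamma^{0,p+q-1}, \ldots, \gamma^{p-2,q+1}$ by $\delbar$-primitives, in order of increasing $r$. After both phases $\gamma = \gamma^{p-1,q}$, and bidegree matching in $d\gamma = \alpha$ forces $\delbar\gamma^{p-1,q} = 0$ and $\alpha = \del\gamma^{p-1,q}$. A final application of $H^{p-1,q}_{\delbar}(X) = 0$ provides $\gamma^{p-1,q} = \delbar\beta$, whence $\alpha = \del\delbar\beta$ and $[\alpha]_{BC} = 0$.

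For \emph{(b)}, given a $d$-closed representative $\eta = \sum_{r+s=p+q}\eta^{r,s}$ of a class in $H^{p+q}_{dR}(X;\C)$, the plan is entirely parallel: subtract $d$-exact forms so as to leave only $\eta^{p,q}$. The conjugate vanishing $H^{r,s}_{\del}(X) = 0$ for $r \geq p+1$ successively removes $\eta^{p+q,0}, \ldots, \eta^{p+1,q-1}$, and the direct hypothesis $H^{r,s}_{\delbar}(X) = 0$ for $s \geq p+1$ removes $\eta^{0,p+q}, \ldots, \eta^{p-1,q+1}$; at each step the target component is $\del$- or $\delbar$-closed by virtue of $d\eta = 0$ together with the vanishing of all previously absorbed terms. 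The surviving $\eta^{p,q}$ is $d$-closed of pure type, so bidegree separation forces $\del\eta^{p,q} = \delbar\eta^{p,q} = 0$, yielding a Bott-Chern class that maps onto $[\eta]_{dR}$.

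The main technical obstacle will be the inductive bookkeeping: at each elimination step I must verify that the target component is actually $\del$- or $\delbar$-closed, and that the subsequent modifications — which alter a neighbouring component by an exact term of complementary type — touch only components scheduled for later elimination (or the single retained $\gamma^{p-1,q}$ in part (a), whose needed $\delbar$-closedness is itself a type-component of $d\gamma = \alpha$ and is thus unaffected by the preceding absorptions).
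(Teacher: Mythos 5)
Your argument is correct: the bidegree bookkeeping works out exactly, and the two phases together with the final step consume precisely the Dolbeault groups listed in the hypotheses (after conjugating half of them into $\del$-cohomology), with the reduction to $p\leq q$ legitimately handled by conjugation since both the hypotheses and the conclusion are conjugation-symmetric. What you do differently from the paper is the machinery: the paper never touches individual form components, but instead encodes the same staircase in sheaf cohomology, using Schweitzer's short exact sequences of sheaves of closed forms, such as $0 \to \mathcal{Z}^{p,\ell}_{\de} \to \mathcal{Z}^{p,\ell}_{\del} \to \mathcal{Z}^{p,\ell+1}_{\de} \to 0$, to produce a chain
$H^{p,q}_{BC}(X) \simeq \check H^1(X;\mathcal{Z}^{p-1,q-1}_{\del\delbar}) \to \check H^1(X;\mathcal{Z}^{p,q-1}_{\de}) \to \cdots \to \check H^{p+q}(X;\underline{\C}_X) \simeq H^{p+q}_{dR}(X;\C)$,
each arrow being injective (case (a)) or surjective (case (b)) because the relevant $\check H^k$ of a sheaf of $\del$- or $\delbar$-closed forms is a Dolbeault group killed by hypothesis. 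The two proofs are the same computation in disguise --- your elimination steps are exactly the connecting homomorphisms of those long exact sequences --- but yours is more elementary and self-contained (no fine-sheaf acyclicity, no Čech--Dolbeault comparison), at the price of the inductive bookkeeping you flag, which you do resolve correctly: each absorption only perturbs the next scheduled component or the retained one, and the closedness needed at each stage is read off from a type component of $\de\gamma=\alpha$ (resp.\ $\de\eta=0$) after the earlier components have been zeroed out. The sheaf-theoretic route buys cleaner naturality statements and a uniform treatment of both orderings of $p$ and $q$ without a conjugation reduction. One presentational nit: in part (a) you should state explicitly that the first step of phase 1 uses that the $(p+q,0)$-component of $\alpha$ vanishes because $q\geq 1$ (and dually for phase 2), but this is immediate.
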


\begin{proof}
We split the proof in the following steps.

\paragrafon{1} Consider the exact sequence
$$ 0 \to \mathcal{Z}^{p-1,q-1}_{\del\delbar} \to \mathcal{A}^{p-1,q-1} \stackrel{\del\delbar}{\to} \mathcal{Z}^{p,q}_{\de} \to 0 $$
of sheaves, \cite[Lemme 4.1(2.i)]{schweitzer}. Since the sheaf $\mathcal{A}^{p-1,q-1}$ is (a fine sheaf over a para-compact Hausdorff space and hence) acyclic, one gets the exact sequence
$$ \check H^0(X;\mathcal{A}^{p-1,q-1}) \stackrel{\del\delbar}{\to} \check H^0(X;\mathcal{Z}^{p,q}_{\de}) \to \check H^1(X;\mathcal{Z}^{p-1,q-1}_{\del\delbar}) \to 0 \;. $$
It follows that
\begin{equation}\label{eq:bc-leq-dr-1}
\check H^1(X;\mathcal{Z}^{p-1,q-1}_{\del\delbar}) \;\simeq\; \frac{\check H^0(X;\mathcal{Z}^{p,q}_{\de})}{\del\delbar \check H^0(X;\mathcal{A}^{p-1,q-1})} \;=\; H^{p,q}_{BC}(X) \;.
\end{equation}

\medskip

\paragrafon{2}
Consider the exact sequence
$$ 0 \to \mathcal{Z}^{p-1,q-1}_{\del} \to \mathcal{Z}^{p-1,q-1}_{\del\delbar} \stackrel{\del}{\to} \mathcal{Z}^{p,q-1}_{\de} \to 0 $$
of sheaves, because of the Dolbeault and Grothendieck Lemma, see, e.g., \cite[Lemma I.3.29]{demailly-agbook}.

\paragrafon{2a}
In case {\itshape (a)}, since $\check H^1(X;\mathcal{Z}^{p-1,q-1}_{\del}) \simeq H^{p,q-1}_{\del}(X) = \overline{H^{q-1,p}_{\delbar}(X)} = \{0\}$ by the hypothesis, one gets the injective map
\begin{equation}\label{eq:bc-leq-dr-2}
0 \to \check H^1(X;\mathcal{Z}^{p-1,q-1}_{\del\delbar}) \to \check H^1(X;\mathcal{Z}^{p,q-1}_{\de}) \;.
\end{equation}

\paragrafon{2b}
In case {\itshape (b)}, since $\check H^2(X;\mathcal{Z}^{p-1,q-1}_{\del}) \simeq H^{p+1,q-1}_{\del}(X) = \overline{H^{q-1,p+1}_{\delbar}(X)} = \{0\}$ by the hypothesis, one gets the surjective map
\begin{equation}\label{eq:bc-geq-dr-2}
\check H^1(X;\mathcal{Z}^{p-1,q-1}_{\del\delbar}) \to \check H^1(X;\mathcal{Z}^{p,q-1}_{\de}) \to 0 \;.
\end{equation}

\medskip

\paragrafon{3} Fix $\ell\in\left\{0,\ldots,q-2\right\}$. Consider the exact sequence
$$ 0 \to \mathcal{Z}^{p,\ell}_{\de} \to \mathcal{Z}^{p,\ell}_{\del} \stackrel{\delbar}{\to} \mathcal{Z}^{p,\ell+1}_{\de} \to 0 $$
of sheaves, \cite[Lemme 4.1(2.i, 2.$\overline{\text{ii}}$)]{schweitzer}.

\paragrafon{3a}
In case {\itshape (a)}, since $\check H^{q-\ell-1}(X;\mathcal{Z}^{p,\ell}_{\del}) \simeq H^{p+q-\ell-1, \ell}_{\del}(X) = \overline{H^{\ell, p+q-\ell-1}_{\delbar}(X)} = \{0\}$ by the hypothesis, one gets the injective map
$$ 0 \to \check H^{q-\ell-1}(X;\mathcal{Z}^{p,\ell+1}_{\de}) \to \check H^{q-\ell}(X;\mathcal{Z}^{p,\ell}_{\de}) \;. $$
Hence one gets the injective map
\begin{equation}\label{eq:bc-leq-dr-3}
0 \to H^1(X;\mathcal{Z}^{p,q-1}_{\de}) \to \check H^{q}(X;\mathcal{Z}^{p,0}_{\de}) \;.
\end{equation}

\paragrafon{3b}
In case {\itshape (b)}, since $\check H^{q-\ell}(X;\mathcal{Z}^{p,\ell}_{\del}) \simeq H^{p+q-\ell, \ell}_{\del}(X) = \overline{H^{\ell, p+q-\ell}_{\delbar}(X)} = \{0\}$ by the hypothesis, one gets the surjective map
$$ \check H^{q-\ell-1}(X;\mathcal{Z}^{p,\ell+1}_{\de}) \to \check H^{q-\ell}(X;\mathcal{Z}^{p,\ell}_{\de}) \to 0 \;. $$
Hence one gets the surjective map
\begin{equation}\label{eq:bc-geq-dr-3}
\check H^1(X;\mathcal{Z}^{p,q-1}_{\de}) \to \dim_\C \check H^{q}(X;\mathcal{Z}^{p,0}_{\de}) \to 0 \;.
\end{equation}

\medskip

\paragrafon{4} Fix $\ell\in\left\{0,\ldots,p-1\right\}$. Consider the exact sequence
$$ 0 \to \mathcal{Z}^{\ell,0}_{\de} \to \mathcal{Z}^{\ell,0}_{\delbar} \stackrel{\del}{\to} \mathcal{Z}^{\ell+1,0}_{\de} \to 0 $$
of sheaves, \cite[Lemme 4.1(2.ii)]{schweitzer}.

\paragrafon{4a}
In case {\itshape (a)}, since $\check H^{p+q-\ell-1}(X;\mathcal{Z}^{\ell,0}_{\delbar}) \simeq H^{\ell, p+q-\ell-1}_{\delbar}(X) = \{0\}$ by the hypothesis, one gets the injective map
$$ 0 \to \check H^{p+q-\ell-1}(X;\mathcal{Z}^{\ell+1,0}_{\de}) \to \check H^{p+q-\ell}(X;\mathcal{Z}^{\ell,0}_{\de}) \;. $$
Hence one gets the injective map
\begin{equation}\label{eq:bc-leq-dr-4}
0 \to \check H^q(X;\mathcal{Z}^{p,0}_{\de}) \to \check H^{p+q}(X;\underline{\C}_X) \;.
\end{equation}

\paragrafon{4b}
In case {\itshape (b)}, since $\check H^{p+q-\ell}(X;\mathcal{Z}^{\ell,0}_{\delbar}) \simeq H^{\ell, p+q-\ell}_{\delbar}(X) = \{0\}$ by the hypothesis, one gets the surjective map
$$ \check H^{p+q-\ell-1}(X;\mathcal{Z}^{\ell+1,0}_{\de}) \to \check H^{p+q-\ell}(X;\mathcal{Z}^{\ell,0}_{\de}) \to 0 \;. $$
Hence one gets the surjective map
\begin{equation}\label{eq:bc-geq-dr-4}
\dim_\C \check H^q(X;\mathcal{Z}^{p,0}_{\de}) \to \dim_\C \check H^{p+q}(X;\underline{\C}_X) \to 0 \;.
\end{equation}

\medskip

\paragrafon{5a}
In case {\itshape (a)}, by using \eqref{eq:bc-leq-dr-1}, \eqref{eq:bc-leq-dr-2}, \eqref{eq:bc-leq-dr-3}, and \eqref{eq:bc-leq-dr-4}, one gets
$$
H^{p,q}_{BC}(X)
\stackrel{\simeq}{\to}
\check H^1(X;\mathcal{Z}^{p-1,q-1}_{\del\delbar})
\hookrightarrow
\check H^1(X;\mathcal{Z}^{p,q-1}_{\de})
\hookrightarrow
\check H^{q}(X;\mathcal{Z}^{p,0}_{\de})
\hookrightarrow
\check H^{p+q}(X;\underline{\C}_X)
\stackrel{\simeq}{\to}
H^{p+q}_{dR}(X;\C) \;,
$$
concluding the proof of the item {\itshape (a)}.

\paragrafon{5b}
In case {\itshape (b)}, by using \eqref{eq:bc-leq-dr-1}, \eqref{eq:bc-geq-dr-2}, \eqref{eq:bc-geq-dr-3}, and \eqref{eq:bc-geq-dr-4}, one gets
$$
H^{p,q}_{BC}(X)
\stackrel{\simeq}{\to}
\check H^1(X;\mathcal{Z}^{p-1,q-1}_{\del\delbar})
\twoheadrightarrow
\check H^1(X;\mathcal{Z}^{p,q-1}_{\de})
\twoheadrightarrow
\check H^{q}(X;\mathcal{Z}^{p,0}_{\de})
\twoheadrightarrow
\check H^{p+q}(X;\underline{\C}_X)
\stackrel{\simeq}{\to}
H^{p+q}_{dR}(X;\C) \;,
$$
concluding the proof of the item {\itshape (b)}.
\end{proof}

As regards the Aeppli cohomology, we have the following vanishing result.

\begin{thm}\label{thm:a-dr}
 Let $X$ be a complex manifold. Fix $(p,q)\in\left(\N\setminus\{0\}\right)^2$. If,
 $$ H^{p,q}_{\delbar}(X) \;=\; \left\{0\right\} \qquad \text{ and } \qquad H^{q,p}_{\delbar}(X) \;=\; \left\{0\right\} \;, $$
 then
 $$ H^{p,q}_{A}(X) \;=\; \left\{0\right\} \;. $$
\end{thm}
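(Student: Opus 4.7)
Fix $\alpha\in\mathcal{A}^{p,q}(X)$ with $\del\delbar\alpha=0$; the goal is to exhibit $\beta\in\mathcal{A}^{p-1,q}(X)$ and $\gamma\in\mathcal{A}^{p,q-1}(X)$ such that $\alpha=\del\beta+\delbar\gamma$. Note first that, by complex conjugation, the hypothesis $H^{q,p}_{\delbar}(X)=\{0\}$ is equivalent to $H^{p,q}_{\del}(X)=\{0\}$, so both Dolbeault groups of bidegree $(p,q)$ vanish.

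My plan is to mirror the sheaf-theoretic strategy of Theorem~\ref{thm:bc-leq-geq-dr}: realize $H^{p,q}_A(X)$ as a first \v{C}ech cohomology group and use a short exact sequence of sheaves to reduce its vanishing to the assumed Dolbeault vanishings. Concretely, the surjectivity (at the sheaf level) of the morphism
$$(\del,\delbar)\colon \mathcal{A}^{p-1,q}\oplus\mathcal{A}^{p,q-1}\twoheadrightarrow \mathcal{Z}^{p,q}_{\del\delbar}, \qquad (\omega_1,\omega_2)\mapsto\del\omega_1+\delbar\omega_2,$$
is exactly the local $\del\delbar$-Poincar\'e lemma (valid for $p,q\geq 1$), so the fineness of $\mathcal{A}^{p-1,q}\oplus\mathcal{A}^{p,q-1}$ yields, via the associated long exact sequence, the identification $H^{p,q}_A(X)\cong \check H^1(X;\mathcal{K}^{p,q})$ for the kernel sheaf $\mathcal{K}^{p,q}$ of that morphism.

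Next, I would fit $\mathcal{K}^{p,q}$ into the short exact sequence of sheaves
$$0 \to \mathcal{Z}^{p-1,q}_{\del} \oplus \mathcal{Z}^{p,q-1}_{\delbar} \to \mathcal{K}^{p,q} \to \mathcal{Z}^{p,q}_{\de} \to 0,$$
where the surjection sends $(\omega_1,\omega_2)\mapsto \del\omega_1=-\delbar\omega_2$ and the target is the sheaf $\mathcal{Z}^{p,q}_{\de}=\del\mathcal{A}^{p-1,q}\cap \delbar\mathcal{A}^{p,q-1}$ of $\de$-closed $(p,q)$-forms (the equality between these two descriptions holds at the sheaf level by the local $\del$- and $\delbar$-Poincar\'e lemmas, using $p,q\geq 1$). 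Taking \v{C}ech cohomology and invoking the usual Dolbeault identifications
$$\check H^1(X;\mathcal{Z}^{p-1,q}_{\del})=H^{p,q}_{\del}(X)=\{0\}, \qquad \check H^1(X;\mathcal{Z}^{p,q-1}_{\delbar})=H^{p,q}_{\delbar}(X)=\{0\},$$
both vanishing by hypothesis, gives an injection $H^{p,q}_A(X)\hookrightarrow \check H^1(X;\mathcal{Z}^{p,q}_{\de})$.

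The main obstacle is then to show that this injection is zero, since the target is not a priori trivial. I would do this by a direct \v{C}ech calculation: a local decomposition $\alpha|_{U_i}=\del\beta_i+\delbar\gamma_i$ yields the representative cocycle $\{(\beta_i-\beta_j,\gamma_i-\gamma_j)\}$ of $[\alpha]_A$ in $\check H^1(X;\mathcal{K}^{p,q})$, whose image in $\check H^1(X;\mathcal{Z}^{p,q}_{\de})$ is $\{\del(\beta_i-\beta_j)\}=\{-\delbar(\gamma_i-\gamma_j)\}$. I would then construct a global $\de$-closed $0$-cochain $\{\xi_i\}$ trivializing this cocycle of the form $\xi_i=\zeta-\delbar\gamma_i$, where $\zeta\in\mathcal{A}^{p,q}(X)$ is chosen so that $\zeta-\alpha$ is $\del$-closed (hence $\del$-exact by $H^{p,q}_{\del}(X)=\{0\}$, say $\zeta-\alpha=\del\tau$) and so that $\xi_i$ is also $\delbar$-closed (forcing $\delbar\alpha=\del\delbar\tau$, which is handled by rechoosing $\tau$ using the vanishing $H^{p,q}_{\delbar}(X)=\{0\}$ applied to $\delbar\alpha+\del\delbar\tau$). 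This closes the argument, yielding $H^{p,q}_A(X)=\{0\}$.
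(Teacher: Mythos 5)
Your Steps 1--3 are sound, and in fact more careful than the paper's own argument: the paper quotes Schweitzer's exact sequence with kernel $\mathcal{Z}^{p-1,q}_{\del}\oplus\mathcal{Z}^{p,q-1}_{\delbar}$ and concludes in two lines that $H^{p,q}_{A}(X)\simeq\check H^1(X;\mathcal{Z}^{p-1,q}_{\del}\oplus\mathcal{Z}^{p,q-1}_{\delbar})\simeq H^{p,q}_{\del}(X)\oplus H^{p,q}_{\delbar}(X)=\{0\}$, whereas you correctly observe that the kernel of $(\omega_1,\omega_2)\mapsto\del\omega_1+\delbar\omega_2$ is the a priori larger sheaf $\mathcal{K}^{p,q}$, and you therefore pick up the extra term $\check H^1(X;\mathcal{Z}^{p,q}_{\de})$. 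The difficulty is that your final step, which is supposed to kill that extra term, does not work --- and it is exactly there that the whole content of the statement now sits.

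Concretely: your cochain $\xi_i=\zeta-\delbar\gamma_i$ is $\de$-closed if and only if $\delbar\zeta=0$ and $\del\zeta=\del\delbar\gamma_i=\del\alpha$; writing $\zeta=\alpha+\del\tau$, the requirement becomes $\delbar\alpha=\del\delbar\tau$ for some global $\tau\in\mathcal{A}^{p-1,q}(X)$. The proposed justification --- ``applying $H^{p,q}_{\delbar}(X)=\{0\}$ to $\delbar\alpha+\del\delbar\tau$'' --- does not typecheck: that form has bidegree $(p,q+1)$, where no vanishing is assumed. What you actually need is that the $\de$-closed, $\delbar$-exact form $\delbar\alpha$ be $\del\delbar$-exact, and neither $H^{p,q}_{\delbar}(X)=\{0\}$ nor $H^{p,q}_{\del}(X)=\{0\}$ yields this. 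Worse, the existence of such a $\zeta$ (a global $\delbar$-closed $(p,q)$-form with $\zeta-\alpha\in\imm\del$) is essentially equivalent to the theorem itself: if $\alpha=\zeta+\del\tau$ with $\delbar\zeta=0$, then $\zeta\in\imm\delbar$ by $H^{p,q}_{\delbar}(X)=\{0\}$, so $\alpha\in\imm\del+\imm\delbar$ in one line, with no \v{C}ech cohomology at all. So the sheaf-theoretic apparatus in your write-up does no work, and the entire difficulty is deferred to an unproved claim. To close the gap you would have to show by other means that the image of $\check H^1(X;\mathcal{K}^{p,q})$ in $\check H^1(X;\mathcal{Z}^{p,q}_{\de})$ vanishes, or else justify the identification of the kernel sheaf with $\mathcal{Z}^{p-1,q}_{\del}\oplus\mathcal{Z}^{p,q-1}_{\delbar}$ that the paper takes from Schweitzer's Lemme 4.1(3.i).
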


\begin{proof}
We split the proof in the following steps.

\paragrafon{1} Consider the exact sequence
$$ 0 \to \mathcal{Z}^{p-1,q}_{\del} \oplus \mathcal{Z}^{p,q-1}_{\delbar} \to \mathcal{A}^{p-1,q} \oplus \mathcal{A}^{p,q-1} \stackrel{(\del,\delbar)}{\to} \mathcal{Z}^{p,q}_{\del\delbar} \to 0 $$
of sheaves, \cite[Lemme 4.1(3.i)]{schweitzer}. Since the sheaf $\mathcal{A}^{p-1,q} \oplus \mathcal{A}^{p,q-1}$ is (a fine sheaf over a para-compact Hausdorff space and hence) acyclic, one gets the exact sequence
$$ \check H^0(X;\mathcal{A}^{p-1,q} \oplus \mathcal{A}^{p,q-1}) \stackrel{(\del,\delbar)}{\to} \check H^0(X;\mathcal{Z}^{p,q}_{\del\delbar}) \to \check H^1(X;\mathcal{Z}^{p-1,q}_{\del} \oplus \mathcal{Z}^{p,q-1}_{\delbar}) \to 0 \;, $$
It follows that
\begin{equation}\label{eq:vanishing-A-1}
\check H^1(X;\mathcal{Z}^{p-1,q}_{\del} \oplus \mathcal{Z}^{p,q-1}_{\delbar}) \;\simeq\; \frac{\check H^0(X;\mathcal{Z}^{p,q}_{\del\delbar})}{\del \check H^0(X;\mathcal{A}^{p-1,q}) + \delbar \check H^0(X;\mathcal{A}^{p,q-1})} \;=\; H^{p,q}_{A}(X) \;.
\end{equation}

\paragrafon{2} Since $\check H^1(X;\mathcal{Z}^{p-1,q}_{\del}) \simeq H^{p,q}_{\del}(X) = \overline{H^{q,p}_{\delbar}(X)} = \{0\}$ and $\check H^1(X;\mathcal{Z}^{p,q-1}_{\delbar}) \simeq H^{p,q}_{\delbar}(X) = \{0\}$ by the hypotheses, one gets
\begin{equation}\label{eq:vanishing-A-2}
\check H^1(X;\mathcal{Z}^{p-1,q}_{\del} \oplus \mathcal{Z}^{p,q-1}_{\delbar}) \;=\; \{0\} \;.
\end{equation}

\paragrafon{3} By \eqref{eq:vanishing-A-1} and \eqref{eq:vanishing-A-2}, one gets the vanishing of $H^{p,q}_{A}(X)$.
\end{proof}

As a straightforward consequence, we get the following vanishing result for cohomologically $q$-complete manifold.

\begin{cor}
 Let $X$ be a cohomologically $q$-complete manifold. Then $H^{r,s}_{A}(X)=\{0\}$ for any $(r,s)\in\Z^2$ such that $\min\{r,s\}\geq q$.
\end{cor}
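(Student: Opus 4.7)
The plan is to deduce this directly from Theorem \ref{thm:a-dr} by unwinding the definitions. Write $(r,s)\in\Z^2$ with $\min\{r,s\}\geq q$; the case when one of $r,s$ is negative is vacuous (Aeppli cohomology in bidegrees with a negative component is zero), so I may assume $r\geq q\geq 1$ and $s\geq q\geq 1$, putting the pair in $(\N\setminus\{0\})^2$.

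Next I verify the two Dolbeault vanishings needed to invoke Theorem \ref{thm:a-dr} at the bidegree $(p,q)=(r,s)$ (to avoid notation clash, note that the letter $q$ appearing in the statement of Theorem \ref{thm:a-dr} is an index, distinct from the $q$ in \emph{$q$-complete}). The hypothesis that $X$ is cohomologically $q$-complete means $H^{a,b}_{\delbar}(X)=\{0\}$ whenever $b\geq q$. Since $s\geq q$ we have $H^{r,s}_{\delbar}(X)=\{0\}$, and since $r\geq q$ we have $H^{s,r}_{\delbar}(X)=\{0\}$. These are precisely the two hypotheses of Theorem \ref{thm:a-dr}.

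Applying Theorem \ref{thm:a-dr} therefore yields $H^{r,s}_{A}(X)=\{0\}$, as desired. There is essentially no obstacle here: the whole content is the observation that the symmetry $(r,s)\mapsto(s,r)$ required by Theorem \ref{thm:a-dr} is automatic once $\min\{r,s\}\geq q$, since both the bidegree and its transpose then have second index at least $q$, which is exactly what cohomological $q$-completeness controls.
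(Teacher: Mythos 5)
Your proof is correct and is exactly the argument the paper intends: the corollary is stated as a ``straightforward consequence'' of Theorem \ref{thm:a-dr}, and your observation that $\min\{r,s\}\geq q$ makes both $H^{r,s}_{\delbar}(X)$ and $H^{s,r}_{\delbar}(X)$ vanish is the whole content. (Note only that since $q\geq 1$, the condition $\min\{r,s\}\geq q$ already forces $r,s\geq 1$, so the negative-index case you set aside cannot occur at all.)
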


\section{Dolbeault cohomology vanishing and Bott-Chern and Aeppli cohomologies}

As partial converse of Theorem \ref{thm:bc-leq-geq-dr} and of Theorem \ref{thm:a-dr} respectively, we provide the following results.

\begin{prop}\label{prop:bc-vanish-dolb}
 Let $X$ be a complex manifold of complex dimension $n$. Fix $(p,q)\in \N\times\left(\N\setminus\{0\}\right)$. If
 $$ H^{p,q}_{BC}(X) \;=\; \{0\} \qquad \text{ and } \qquad H^{p+1,q}_{BC}(X) \;=\; \{0\} \;, $$
 then
 $$ H^{p,q}_{\delbar}(X) \;=\; \{0\} \;. $$
\end{prop}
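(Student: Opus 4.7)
The plan is to take an arbitrary $\delbar$-closed form $\alpha \in A^{p,q}(X)$ and, by using the two Bott-Chern vanishings in succession, exhibit $\alpha$ as $\delbar$-exact. The whole argument stays at the level of forms; no sheaf cohomology machinery is needed for this converse direction.

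First I would look at $\del\alpha \in A^{p+1,q}(X)$. Since $\delbar\alpha = 0$, this form is both $\del$-closed and $\delbar$-closed, hence defines a class in $H^{p+1,q}_{BC}(X)$, and by the second hypothesis there exists $\gamma \in A^{p,q-1}(X)$ with $\del\alpha = \del\delbar\gamma$.

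Next I would replace $\alpha$ by the correction $\tilde\alpha := \alpha - \delbar\gamma$. A quick check shows that $\tilde\alpha$ is still $\delbar$-closed and is in addition $\del$-closed (since $\del\tilde\alpha = \del\alpha - \del\delbar\gamma = 0$), so it represents a class in $H^{p,q}_{BC}(X)$. The first hypothesis then forces $\tilde\alpha = \del\delbar\eta = -\delbar\del\eta$ for some $\eta \in A^{p-1,q-1}(X)$ when $p \geq 1$, or $\tilde\alpha = 0$ when $p = 0$ (in which case $\imm\del\delbar$ in bidegree $(0,q)$ is trivial, so the vanishing of $H^{0,q}_{BC}(X)$ is stronger). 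Either way, $\tilde\alpha$ is $\delbar$-exact, hence so is $\alpha = \delbar(\gamma - \del\eta)$, respectively $\alpha = \delbar\gamma$.

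I do not foresee a genuine obstacle: the argument is the natural Bott-Chern analogue of the observation that a $\delbar$-closed form whose $\del$-image is $\del\delbar$-exact can be adjusted by a $\delbar$-coboundary so as to become $d$-closed, after which a second $\del\delbar$-potential converts it into a genuine $\delbar$-primitive. The only small point of care is the degenerate case $p = 0$, where Bott-Chern cohomology collapses to $\ker\del \cap \ker\delbar$; but this only strengthens rather than weakens the conclusion of the second step.
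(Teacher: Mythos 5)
Your argument is correct and is essentially identical to the paper's own proof: use $H^{p+1,q}_{BC}(X)=\{0\}$ to find a $\del\delbar$-potential for $\del\alpha$, correct $\alpha$ by the corresponding $\delbar$-exact term to obtain a $d$-closed representative, and then use $H^{p,q}_{BC}(X)=\{0\}$ to make it $\delbar$-exact. The paper handles your $p=0$ caveat by the same convention (declaring $\wedge^{\ell,m}X=\{0\}$ for negative indices), so there is nothing to add.
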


\begin{proof}
 Take $\mathfrak{a} = [\alpha] \in H^{p,q}_{\delbar}(X)$, and consider $[\del\alpha] \in H^{p+1,q}_{BC}(X) = \{0\}$. (As a matter of notation, we set $\wedge^{\ell,m}X:=\{0\}$ for $\ell\not\in\N$ or $m\not\in\N$.) Hence there exists $\beta \in \wedge^{p,q-1}X$ such that $\del\alpha = \del\delbar\beta$. Consider $\left[\alpha-\delbar\beta\right] \in H^{p,q}_{BC}(X) = \{0\}$. Hence there exists $\gamma \in \wedge^{p-1,q-1}X$ such that $\alpha-\delbar\beta = \del\delbar\gamma$. Therefore $\alpha = \delbar\left(\beta-\del\gamma\right)$, that is, $\mathfrak{a} = 0 \in H^{p,q}_{\delbar}(X)$.
\end{proof}

\begin{prop}\label{prop:a-vanish-dolb}
 Let $X$ be a complex manifold of complex dimension $n$. Fix $(p,q)\in \N\times\left(\N\setminus\{0\}\right)$. If
 $$ H^{p-1,q}_{A}(X) \;=\; \{0\} \qquad \text{ and } \qquad H^{p,q}_{A}(X) \;=\; \{0\} \;, $$
 then
 $$ H^{p,q}_{\delbar}(X) \;=\; \{0\} \;. $$
\end{prop}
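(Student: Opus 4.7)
The plan mirrors the proof of Proposition \ref{prop:bc-vanish-dolb}, now peeling off one $\partial$-contribution using the second Aeppli vanishing instead of the Bott-Chern vanishing. We adopt the convention $\wedge^{\ell,m}X := \{0\}$ whenever $\ell$ or $m$ is negative, so the case $p=0$ is automatically covered.

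Take a class $\mathfrak{a} = [\alpha] \in H^{p,q}_{\delbar}(X)$. The first step is to exploit that $\bar\partial\alpha = 0$ implies $\partial\bar\partial\alpha = 0$, hence $\alpha$ defines a class in $H^{p,q}_A(X) = \{0\}$. Consequently there exist $\mu \in \wedge^{p-1,q}X$ and $\nu \in \wedge^{p,q-1}X$ with
\[
\alpha \;=\; \del\mu + \delbar\nu.
\]
The remaining obstruction is the presence of $\del\mu$, which is not obviously $\delbar$-exact.

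The second step kills $\mu$ using the other hypothesis. Applying $\delbar$ to the decomposition gives $0 = \delbar\del\mu = -\del\delbar\mu$, so $\mu$ is $\del\delbar$-closed and defines a class in $H^{p-1,q}_A(X) = \{0\}$. Therefore there exist $\mu' \in \wedge^{p-2,q}X$ and $\mu'' \in \wedge^{p-1,q-1}X$ with $\mu = \del\mu' + \delbar\mu''$. Substituting,
\[
\del\mu \;=\; \del\delbar\mu'' \;=\; -\delbar\del\mu'',
\]
so
\[
\alpha \;=\; \del\mu + \delbar\nu \;=\; \delbar\bigl(\nu - \del\mu''\bigr),
\]
which shows $\mathfrak{a} = 0$ in $H^{p,q}_{\delbar}(X)$.

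There is no real obstacle: the argument is a two-step diagram chase at the level of forms, parallel to Proposition \ref{prop:bc-vanish-dolb}, with the only mildly delicate point being that one must verify $\del\delbar\mu = 0$ in order to invoke the vanishing of $H^{p-1,q}_A(X)$, and then observe that $\del\mu$ automatically lies in the image of $\delbar$ once $\mu$ itself is a sum of $\del$- and $\delbar$-exact pieces.
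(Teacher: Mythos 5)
Your proof is correct and follows essentially the same two-step argument as the paper: use $H^{p,q}_A(X)=\{0\}$ to write $\alpha=\del\mu+\delbar\nu$, then use $H^{p-1,q}_A(X)=\{0\}$ on $\mu$ to absorb $\del\mu$ into a $\delbar$-exact term. Your explicit verification that $\del\delbar\mu=0$ (so that $\mu$ indeed defines an Aeppli class) is a detail the paper leaves implicit, and it checks out.
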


\begin{proof}
 Take $\mathfrak{a} = [\alpha] \in H^{p,q}_{\delbar}(X)$, and consider $[\alpha] \in H^{p,q}_{A}(X) = \{0\}$. (As a matter of notation, we set $\wedge^{\ell,m}X:=\{0\}$ for $\ell\not\in\N$ or $m\not\in\N$.) Hence there exist $\beta \in \wedge^{p-1,q}X$ and $\gamma \in \wedge^{p,q-1}X$ such that $\alpha = \del\beta + \delbar\gamma$. Consider $\left[\beta\right] \in H^{p-1,q}_{A}(X) = \{0\}$. Hence there exist $\xi \in \wedge^{p-2,q}X$ and $\eta \in \wedge^{p-1,q-1}X$ such that $\beta = \del\xi+\delbar \eta$. Therefore $\alpha = \delbar\left(\gamma-\del\eta\right)$, that is, $\mathfrak{a} = 0 \in H^{p,q}_{\delbar}(X)$.
\end{proof}

\begin{rem}
Note that, by Proposition \ref{prop:a-vanish-dolb}, respectively Proposition \ref{prop:bc-vanish-dolb}, and by \cite[Theorem V.5.2]{grauert-remmert}, for domains $D$ of $\C^n$ to be Stein, it is sufficient that $H^{0,\ell}_{A}(D) = \{0\}$ for any $\ell \in \left\{1,\ldots, n-1\right\}$, respectively that $H^{0,\ell}_{BC}(D) = H^{1,\ell}_{BC}(D) = \{0\}$ for any $\ell \in \left\{1,\ldots, n-1\right\}$, 	but not the converse.
\end{rem}

\section{Cohomologically Bott-Chern $q$-complete manifolds}

We recall that, fixed $q\in\N\setminus\{0\}$, a complex manifold $X$ is called \emph{cohomologically $q$-complete} if $H^{r,s}_{\delbar}(X)=\{0\}$ for any $r\in\N$ and for any $s\geq q$. In view of A. Andreotti and H. Grauert vanishing theorem, \cite[Proposition 27]{andreotti-grauert}, $q$-complete manifolds \cite{andreotti-grauert, rothstein} are cohomologically $q$-complete. Conversely, cohomologically $q$-complete domains of a Stein manifold with boundary of class $\mathcal{C}^2$ are $q$-complete, \cite[Theorem 3.8]{eastwood-vignasuria}.

In this section, we study the Bott-Chern counterpart of $q$-completeness. More precisely, consider the following definition.

\begin{defi}
 Let $X$ be a complex manifold of complex dimension $n$, and fix an integer $q\in \left\{ 1,\, \ldots ,\, n \right\}$. The manifold $X$ is called \emph{cohomologically Bott-Chern $q$-complete} if there holds that, for any positive integers $r$ and $s$ such that $r+s \geq n+q$, then $H^{r,s}_{BC}(X)=\{0\}$.
\end{defi}

\medskip

In order to motivate the previous definition, we note the following straightforward corollary of Theorem \ref{thm:bc-leq-geq-dr}.

\begin{cor}\label{thm:bc-dr}
 Let $X$ be a complex manifold. Fix $(p,q)\in\left(\N\setminus\{0\}\right)^2$. If,
 $$ \sum_{\substack{r+s=p+q-1\\s\geq \min\{p,q\}}} \dim_\C H^{r,s}_{\delbar}(X) \;=\; \sum_{\substack{r+s=p+q\\s\geq\min\{p,q\}+1}} \dim_\C H^{r,s}_{\delbar}(X) \;=\; 0 $$
 then, for any $(h,k)\in\left(\N\setminus\{0\}\right)^2$ such that $h+k=p+q$ and $\min\{p,q\}\leq h,k \leq \max\{p,q\}$, there is a natural isomorphism
 $$ H^{h,k}_{BC}(X) \;\simeq\; H^{p+q}_{dR}(X;\C) \;. $$
\end{cor}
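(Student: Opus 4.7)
The corollary is a direct unpacking of Theorem \ref{thm:bc-leq-geq-dr}, so the plan is simply to check that the hypotheses of both item (a) and item (b) of that theorem hold for each valid $(h,k)$, producing a natural map that is simultaneously injective and surjective.

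First I would reduce to the case $p \leq q$ by symmetry (so $\min\{p,q\}=p$ and $\max\{p,q\}=q$), then fix a pair $(h,k) \in (\N\setminus\{0\})^2$ with $h+k=p+q$ and $p \leq h,k \leq q$. The key observation is that under these constraints $\min\{h,k\} \geq p$. This monotonicity is the entire content of the corollary: the index set
$\{(r,s) : r+s=p+q-1,\, s \geq \min\{h,k\}\}$
is contained in
$\{(r,s) : r+s=p+q-1,\, s \geq p\}$,
so the first hypothesis of Theorem \ref{thm:bc-leq-geq-dr} for $(h,k)$ is bounded above by the first vanishing sum assumed in the corollary, and is therefore zero. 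Similarly, since $\min\{h,k\}+1 \geq p+1$, the second hypothesis of Theorem \ref{thm:bc-leq-geq-dr} for $(h,k)$ is bounded above by the second vanishing sum in the corollary, and is also zero.

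With both vanishing hypotheses in hand, I would apply Theorem \ref{thm:bc-leq-geq-dr}(a) to the pair $(h,k)$ to obtain a natural injective map $H^{h,k}_{BC}(X) \hookrightarrow H^{p+q}_{dR}(X;\C)$, and Theorem \ref{thm:bc-leq-geq-dr}(b) to obtain a natural surjective map $H^{h,k}_{BC}(X) \twoheadrightarrow H^{p+q}_{dR}(X;\C)$. Inspecting Step 5a and Step 5b of the proof of Theorem \ref{thm:bc-leq-geq-dr}, one sees that both maps are the same composition of natural sheaf-cohomology morphisms induced by the exact sequences in Steps 1--4; hence a single natural map is forced to be simultaneously injective and surjective, yielding the required isomorphism
\[
H^{h,k}_{BC}(X) \;\simeq\; H^{p+q}_{dR}(X;\C).
\]

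There is no substantial obstacle here: the proof is combinatorial bookkeeping on index sets. The only point that requires any care is verifying that the injective map from item (a) and the surjective map from item (b) coincide as natural maps, so that we genuinely get an isomorphism rather than merely the abstract equality of dimensions; this is immediate from the identical construction of the two maps in Steps 5a and 5b of Theorem \ref{thm:bc-leq-geq-dr}.
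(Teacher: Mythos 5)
Your proposal is correct and is exactly the intended argument: the paper offers no separate proof, calling this a ``straightforward corollary'' of Theorem \ref{thm:bc-leq-geq-dr}, and your verification that $\min\{h,k\}\geq\min\{p,q\}$ makes the index sets of the two vanishing hypotheses for $(h,k)$ subsets of those assumed for $(p,q)$ is precisely the required bookkeeping. Your final observation that the injection from (a) and the surjection from (b) are the same composition of connecting morphisms from Steps 1--4 (injectivity and surjectivity coming from the vanishing of the preceding and following terms of the same long exact sequences) is the right way to upgrade the two maps to a single isomorphism.
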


In particular, for cohomologically $q$-complete manifolds, we get the following result.

\begin{cor}
 Let $X$ be a cohomologically $q$-complete manifold. Then $H^{r,s}_{BC}=H^{r+s}_{dR}(X;\C)$ for any $(r,s)\in\left(\N\setminus\{0\}\right)^2$ such that $\min\{r,s\}\geq q$.
\end{cor}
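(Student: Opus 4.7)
The plan is to derive this as a direct application of Corollary \ref{thm:bc-dr}. Given a pair $(r,s) \in (\N\setminus\{0\})^2$ with $\min\{r,s\} \geq q$, I would invoke Corollary \ref{thm:bc-dr} with the pair $(p,q)$ in its statement instantiated by $(r,s)$ (writing the cohomological completeness parameter as, say, $q_0$ internally, to avoid the clash of notation).

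The only thing to check is that the two Dolbeault sums appearing as hypotheses of Corollary \ref{thm:bc-dr} vanish. For the first sum, each index $(t,u)$ with $t+u = r+s-1$ and $u \geq \min\{r,s\}$ satisfies $u \geq \min\{r,s\} \geq q$, so cohomological $q$-completeness forces $H^{t,u}_{\delbar}(X) = \{0\}$. For the second sum, each $(t,u)$ satisfies $u \geq \min\{r,s\}+1 \geq q+1 > q$, and again cohomological $q$-completeness gives $H^{t,u}_{\delbar}(X) = \{0\}$. Thus both sums are trivially zero, term by term.

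Corollary \ref{thm:bc-dr} then supplies a natural isomorphism $H^{h,k}_{BC}(X) \simeq H^{r+s}_{dR}(X;\C)$ for every $(h,k) \in (\N\setminus\{0\})^2$ with $h+k = r+s$ and $\min\{r,s\} \leq h,k \leq \max\{r,s\}$. The particular pair $(h,k) = (r,s)$ trivially lies in this range, yielding the desired $H^{r,s}_{BC}(X) \simeq H^{r+s}_{dR}(X;\C)$.

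There is really no substantive obstacle here; the statement is essentially a specialization of Corollary \ref{thm:bc-dr}, and the entire content is the observation that the hypothesis of that corollary is automatically satisfied whenever $\min\{r,s\} \geq q$ under cohomological $q$-completeness. The only minor point to be careful about is notational: the symbol $q$ is overloaded, denoting both the completeness parameter in the corollary being proved and the second index in Corollary \ref{thm:bc-dr}, so I would rename one of them in the write-up.
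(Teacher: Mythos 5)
Your proposal is correct and matches the paper's intent exactly: the paper presents this corollary as an immediate specialization of Corollary \ref{thm:bc-dr}, with no separate argument, and your verification that both Dolbeault sums vanish term by term under cohomological $q$-completeness (since every second index appearing is at least $\min\{r,s\}\geq q$) is precisely the content. The remark about the overloaded symbol $q$ is a sensible notational caution but does not affect the mathematics.
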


By using the Fr\"olicher inequality, \cite[Theorem 2]{frolicher}, see also \cite[Theorem 2.15]{mccleary}, we get the following vanishing result.

\begin{cor}\label{cor:vanishing-bc}
 Let $X$ be a complex manifold. Fix $(p,q)\in\left(\N\setminus\{0\}\right)^2$. If,
 $$ \sum_{\substack{r+s=p+q-1\\s\geq q}} \dim_\C H^{r,s}_{\delbar}(X) \;=\; \sum_{r+s=p+q} \dim_\C H^{r,s}_{\delbar}(X) \;=\; 0 \;, $$
 then
 $$ H^{p,q}_{BC}(X) \;\simeq\; \left\{0\right\} \;. $$
\end{cor}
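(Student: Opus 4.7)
The plan is to chain two results already in place: the injection furnished by Theorem \ref{thm:bc-leq-geq-dr}(a) and the Fr\"olicher inequality. The first takes care of the Bott-Chern side, and the second collapses the de Rham target to zero.

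First I would apply Fr\"olicher's inequality \cite[Theorem 2]{frolicher} at total degree $p+q$,
$$\dim_\C H^{p+q}_{dR}(X;\C) \;\leq\; \sum_{r+s=p+q} \dim_\C H^{r,s}_{\delbar}(X) \;=\; 0,$$
so that the second hypothesis immediately gives $H^{p+q}_{dR}(X;\C) = \{0\}$. Next I would feed the first hypothesis into Theorem \ref{thm:bc-leq-geq-dr}(a): since $s\geq q \geq \min\{p,q\}$, the assumption $\sum_{r+s=p+q-1,\,s\geq q} \dim_\C H^{r,s}_{\delbar}(X)=0$ contains the vanishing demanded by that theorem (in the case $p<q$ one first uses the complex-conjugation symmetry $H^{p,q}_{BC}(X) \simeq \overline{H^{q,p}_{BC}(X)}$ to reduce to the case where the index conventions line up). The theorem then produces a natural injective map
$$H^{p,q}_{BC}(X) \;\hookrightarrow\; H^{p+q}_{dR}(X;\C).$$
Composing with the previous step, $H^{p,q}_{BC}(X)$ embeds into the trivial group and must itself vanish.

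There is no genuine obstacle: the sheaf-theoretic Schweitzer sequences have been packaged inside Theorem \ref{thm:bc-leq-geq-dr}, and Fr\"olicher is quoted directly. The only point that requires a moment's care is the bookkeeping identification between the cutoff $s\geq q$ in the corollary's hypothesis and the cutoff $s\geq \min\{p,q\}$ demanded by Theorem \ref{thm:bc-leq-geq-dr}(a), which is handled by the case split and the Bott-Chern conjugation symmetry.
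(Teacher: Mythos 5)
Your main line --- Fr\"olicher's inequality at total degree $p+q$ to kill $H^{p+q}_{dR}(X;\C)$, composed with the injection furnished by Theorem \ref{thm:bc-leq-geq-dr}(a) --- is exactly the derivation the paper intends, and it is complete whenever $q\leq p$: in that case $\min\{p,q\}=q$, so the corollary's first hypothesis is verbatim the hypothesis of Theorem \ref{thm:bc-leq-geq-dr}(a).

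The patch you propose for the case $p<q$ does not work, however. Conjugation does give an isomorphism $H^{p,q}_{BC}(X)\simeq\overline{H^{q,p}_{BC}(X)}$, but applying Theorem \ref{thm:bc-leq-geq-dr}(a) to the pair $(q,p)$ requires the vanishing of $\sum_{r+s=q+p-1,\,s\geq\min\{q,p\}}\dim_\C H^{r,s}_{\delbar}(X)$, which is \emph{literally the same condition} as for the pair $(p,q)$: both the total degree $p+q-1$ and the cutoff $\min\{p,q\}$ are symmetric in $p$ and $q$. So swapping the indices buys nothing, and the groups $H^{r,s}_{\delbar}(X)$ with $r+s=p+q-1$ and $\min\{p,q\}\leq s<q$ --- which the proof of Theorem \ref{thm:bc-leq-geq-dr}(a) genuinely uses (e.g.\ $H^{q-1,p}_{\delbar}(X)$ in its Step 2a) but which the corollary's hypothesis does not control --- remain unaccounted for. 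To be fair, the paper states the corollary without proof and the same mismatch is present there; in the subsequent application to cohomologically $q$-complete manifolds one always has $\min\{r,s\}\geq q$ for the relevant bidegrees, so the discrepancy is invisible in that use. But as a proof of the corollary as stated, your argument covers only the case $q\leq p$; for $p<q$ you would need either to strengthen the first hypothesis to $s\geq\min\{p,q\}$ or to find an argument that does not pass through Theorem \ref{thm:bc-leq-geq-dr}(a), and the Bott-Chern conjugation symmetry supplies neither.
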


\medskip

As an application, Corollary \ref{cor:vanishing-bc} relates the just introduced notion of cohomologically Bott-Chern $q$-completeness to the more classical notion of $q$-completeness.

\begin{cor}
 Every cohomologically $q$-complete manifold is also cohomologically Bott-Chern $q$-complete.
\end{cor}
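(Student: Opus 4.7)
The plan is to invoke Corollary \ref{cor:vanishing-bc} bidegree-by-bidegree. Write $n$ for the complex dimension of $X$ and $q$ for the parameter of the cohomological $q$-completeness hypothesis. We must show $H^{r,s}_{BC}(X)=\{0\}$ for every pair of positive integers $(r,s)$ with $r+s\geq n+q$.

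First, I would dispose of the degenerate bidegrees: if $r>n$ or $s>n$, then already $\wedge^{r,s}X=\{0\}$, whence $H^{r,s}_{BC}(X)=\{0\}$ trivially. This reduces the problem to bidegrees $(r,s)$ with $1\leq r,s\leq n$. In that range, the inequality $r+s\geq n+q$ combined with $r\leq n$ forces $s\geq q$, and symmetrically $r\geq q$.

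Next, I would verify the two sum-vanishing hypotheses of Corollary \ref{cor:vanishing-bc} at the fixed bidegree $(r,s)$. In the first sum, taken over pairs $(a,b)$ with $a+b=r+s-1$ and $b\geq s$, each exponent satisfies $b\geq s\geq q$, so the cohomological $q$-completeness of $X$ kills every summand directly. In the second sum, taken over all $(a,b)$ with $a+b=r+s$, I would split according to whether $a>n$ or $a\leq n$: in the former case $H^{a,b}_{\delbar}(X)=\{0\}$ for dimensional reasons, while in the latter $b=r+s-a\geq (n+q)-n=q$, so the $q$-completeness hypothesis applies again.

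With both hypotheses of Corollary \ref{cor:vanishing-bc} satisfied at every admissible $(r,s)$, the conclusion $H^{r,s}_{BC}(X)=\{0\}$ follows, proving cohomological Bott-Chern $q$-completeness. I do not foresee any serious obstacle; the argument is essentially a bookkeeping exercise reducing the new notion to the sum-vanishing criterion of Corollary \ref{cor:vanishing-bc}, the latter in turn resting on Theorem \ref{thm:bc-leq-geq-dr} and the Fr\"olicher inequality.
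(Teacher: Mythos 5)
Your proof is correct and follows exactly the route the paper intends: the corollary is stated there as an immediate application of Corollary \ref{cor:vanishing-bc}, and your bidegree-by-bidegree verification (disposing of $r>n$ or $s>n$ trivially, then noting $r+s\geq n+q$ forces $b\geq s\geq q$ in the first sum and $b\geq q$ or $a>n$ in the second) is precisely the bookkeeping the paper leaves implicit. Nothing to add.
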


\medskip

We conclude this note indicating some directions for further investigations.

\begin{rem}
 It would be interesting to have an example of a non-Stein domain being cohomologically Bott-Chern $1$-complete. More precisely, one would have a complex manifold $X$ of complex dimension $2$ such that $\sum_{r+s\geq3}\dim_\C H^{r,s}_{BC}(X)=0$, in particular, with $H^{r,s}_{\delbar}(X)=\{0\}$ for $(r,s)\in\left\{(1,1),\,(2,1),\,(0,2),\,(1,2),\,(2,2)\right\}$ and $H^{0,1}_{\delbar}(X)\neq \{0\}$. As the anonymous Referee pointed out to us, such an example can not occur when $X$ is a domain in $\C^2$ or, more generally, in a complex manifold whose holomorphic cotangent bundle is holomorphically trivial.
\end{rem}

\begin{rem}
 In view of the very definition of $q$-complete domains, \cite{andreotti-grauert, rothstein}, it would be interesting to have a geometric characterization of cohomologically Bott-Chern $q$-complete domains, for example in terms of positivity properties of the Levi form.
\end{rem}

\end{document}